\newtheorem{thm}{Theorem} [section]
\newtheorem{lem}{Lemma}[section]
\theoremstyle{definition}
\theoremstyle{remark}
\numberwithin{equation}{section}
\begin{document}
\title{On the sums of squares of exceptional units in residue class rings}
\author[Y.L. Feng]{Yulu Feng}
\address{Mathematical College, Sichuan University, Chengdu 610064, P.R. China}
\email{yulufeng17@126.com (Y.L. Feng)}
\author[S.F. Hong]{Shaofang Hong$^*$}
\address{Mathematical College, Sichuan University, Chengdu 610064, P.R. China}
%\curraddr{}
\email{sfhong@scu.edu.cn; s-f.hong@tom.com; hongsf02@yahoo.com}
\begin{abstract}
Let $n\ge 1, e\ge 1, k\ge 2$ and $c$ be integers. An integer
$u$ is called a unit in the ring $\mathbb{Z}_n$ of residue
classes modulo $n$ if $\gcd(u, n)=1$. A unit $u$ is called
an exceptional unit in the ring $\mathbb{Z}_n$ if $\gcd(1-u,n)=1$.
We denote by $\mathcal{N}_{k,c,e}(n)$ the number of solutions
$(x_1,...,x_k)$ of the congruence $x_1^e+...+x_k^e\equiv c
\pmod n$ with all $x_i$ being exceptional units in the ring
$\mathbb{Z}_n$. In 2017, Mollahajiaghaei presented a formula
for the number of solutions $(x_1,...,x_k)$ of the congruence
$x_1^2+...+x_k^2\equiv c\pmod n$ with all $x_i$ being the units
in the ring $\mathbb{Z}_n$. Meanwhile, Yang and Zhao gave an exact
formula for $\mathcal{N}_{k,c,1}(n)$. In this paper, by using
Hensel's lemma, exponential sums and quadratic Gauss sums, we derive
an explicit formula for the number $\mathcal{N}_{k,c,2}(n)$.
Our result extends Mollahajiaghaei's theorem and that of Yang and Zhao.
\end{abstract}
\thanks{$^*$S.F. Hong is the corresponding author and was supported
partially by National Science Foundation of China Grants \#11771304, \#12171332.}
\keywords{Ring of residue classes, quadratic diagonal congruence,
exceptional unit, Hensel's lemma, quadratic Gauss sum,
exponential sum}
\subjclass[]{11B13, 11L03, 11L05}
\maketitle

\section{Introduction}
Let $n$ be a positive integer and let $\mathbb Z_n=\{0,1,...,n-1\}$
be the ring of residue classes modulo $n$. Then the set of units
of $\mathbb Z_n$ is $\mathbb{Z}_n^*=\{u\in\mathbb{Z}_n|\gcd(u,n)=1\}$.
Let $e, k$ and $c$ be integers with $e\ge 1$ and $k\ge 2$. We define
$N_{k,c,e}(n)$ to be the number of solutions $(x_1, ..., x_k)\in
(\mathbb{Z}_n^*)^k$ of the following $e$-th diagonal congruence:
\begin{align}\label{1.1}
x_1^e+\cdots+x_k^e\equiv c\pmod n.
\end{align}
For any finite set $S$, we denote by $|S|$ the number of the
elements in $S$. Then
$$N_{k,c,e}(n)=|\{(x_1, ..., x_k)\in(\mathbb{Z}_n^*)^k|x_1^e+
\cdots+x_k^e\equiv c\pmod n\}|.$$
In 1925, Rademacher \cite{[R]} raised the problem
of computing $N_{k,c,1}(n)$. In 1926, Brauer
\cite{[B]} answered this problem. In 2009, by using the
multiplicativity of $N_{k,c,1}(n)$ with respect to $n$,
Sander \cite{[SJ1]} gave a new proof of this problem
for the case $k=2$. In 2015, Yang and Tang \cite{[YT]}
extended Sander's result to the quadratic case by giving
an explicit formula for $N_{2,c,2}(n)$. In 2017,
Mollahajiaghaei \cite{[Mo]} generalized Yang and Tang's
theorem by presenting a formula for $N_{k,c,2}(n)$.

On the other hand, let $R$ be a finite commutative ring with identity
$1_R$ and let $R^*$ be the multiplicative group of $R$. In 1969,
for the sake of solving certain cubic Diophantine equations,
Nagell \cite{[NT]} first introduced the concept of exceptional
units. A unit $u\in R^*$ is called an {\it exceptional unit}
if $1_R-u\in R^*$. We write $R^{**}$ for the set of all
exceptional units of $R$. The exceptional unit is very important
tool in studying lots of types of Diophantine equations including
Thue equations \cite{[TN1]}, Thue-Mahler equations \cite{[TN2]},
discriminant form equations \cite{[SN2]}. Lenstra \cite{[L]}
used exceptional units to find Euclidean number fields.
Since then, many new Euclidean number fields were found,
see, for example, \cite{[H], [NS]}. Besides, exceptional units
are connected with the investigation of cyclic resultants
\cite{[St],[St2]}, Salem numbers and Lehmer's conjecture
related to Mahler's measure \cite{[Si],[Si2]}.

For the case $R=\mathbb Z_n$, it is clear that
$\mathbb{Z}_n^{**}=\{u\in\mathbb{Z}_n|\gcd(u,n)=\gcd(1-u,n)=1\}$.
We define $\mathcal N_{k,c,e}(n)$ to be the number
of exceptional unit solutions $(x_1, ..., x_k)\in(\mathbb{Z}_n^{**})^k$
of the $e$-th diagonal congruence (\ref{1.1}), namely,
$$
\mathcal N_{k,c,e}(n)=|\{(x_1, ..., x_k)\in(\mathbb{Z}_n^{**})^k:
x_1^e+\cdots+x_k^e\equiv c\pmod n\}|.
$$
Obviously, $\mathcal N_{k,c,e}(1)=1$.
In 2016, Sander \cite{[SJ]} obtained a formula for
$\mathcal N_{2,c,1}(n)$. One year later, Yang and Zhao
\cite{[YZ]} generalized Sander's result by giving
the following explicit formula:
$$
\mathcal N_{k,c,1}(n)=(-1)^{k\omega(n)}\prod_{p| n}p^{k\nu_p(n)-\nu_p(n)-k}
\Big(p\sum_{j=0\atop{j\equiv c\pmod p}}^k\binom{k}{j}+(2-p)^k-2^k\Big),
$$
where $\omega(n):=\sum_{p \ {\rm prime}, \ p|n}1$ stands for
the number of distinct prime divisors of $n$. We point out that
an error in the formula in Theorem 1 of \cite{[YZ]} was corrected
by Zhao {\it et al.} \cite{[ZHZ]}, where the sign factor $(-1)^k$
should read $(-1)^{k\omega(n)}$.

In this paper, we address the problem of computing the
number of exceptional unit solutions $(x_1, ..., x_k)$ of
quadratic congruence $x_1^2+\cdots+x_k^2\equiv c\pmod n$.
We will make use of Hensel's lemma,
quadratic Gauss sums and exponential sums
to deduce an explicit formula of $\mathcal{N}_{k,c,2}(n)$
for any positive integer $n$. To state our main result,
we have to introduce some notation and concepts. We denote
$\mathbb{Z}, \mathbb{Z}_{\ge 0}$ and $\mathbb{Z}^+$ to
be the set of integers, the set of nonnegative integers
and the set of positive integers, respectively. For any
prime number $p$ and $x\in\mathbb{Z}$, we let
$e(\frac{x}{p}):=\exp(\frac{2{\pi}{\rm i}x}{p})$ and
denote by $\big(\frac{x}{p}\big)$ the Legendre symbol.
In addition, for any $y,z\in\mathbb{Z}_{\ge 0}$, we denote
by $\binom{y}{z}$ the binomial coefficient, that is,
$\binom{y}{z}=\frac{y!}{z!(y-z)!}$.
Notices that $\binom{0}{0}:=1$. Moreover, for any
$m\in\mathbb{Z}^+$, there exist unique integers $a$ and $r$
with $p\nmid a$ and $r\ge 0$, such that $m=ap^r$. The number $r$
is called the {\it$p$-adic valuation} of $m$, denoted by $r=v_p(m)$.
We can now state the main result of this paper.

\begin{thm}\label{thm1}
Let $n\ge 1, k\ge 2$ and $c$ be integers. If $n$ is an even integer,
then $\mathcal{N}_{k,c,2}(n)=0$. If $n$ is odd, then
\begin{align*}
&\mathcal{N}_{k,c,2}(n)\\
=&(-1)^{k\omega(n)}\prod_{p|n}p^{k\nu_p(n)-\nu_p(n)-k}
\Bigg((2-p)^k-\sum_{i=0}^{\lfloor\frac{k}{2}\rfloor}
(-1)^{\frac{(p-1)i}{2}}p^{i}\binom{k}{2i}\Big(2^{k-2i}
-p\sum_{j=0\atop{p|(j-c)}}^{k-2i}\binom{k-2i}{j}\Big)\notag\\
&-\sum_{i=0}^{\lfloor\frac{k-1}{2}\rfloor}
(-1)^{\frac{(p-1)(i+1)}{2}}p^{i+1}\binom{k}{2i+1}
\sum_{j=0\atop{p\nmid(j-c)}}^{k-2i-1}
\binom{k-2i-1}{j}\Big(\frac{j-c}{p}\Big)\Bigg).
\end{align*}
\end{thm}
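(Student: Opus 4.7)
The even case is immediate: if $2\mid n$, any $u\in\mathbb{Z}_n^{**}$ would force both $u$ and $1-u$ to be odd, which is impossible, so $\mathbb{Z}_n^{**}=\emptyset$ and $\mathcal{N}_{k,c,2}(n)=0$. Now assume $n$ is odd. I would follow the familiar three-step pattern. First, the Chinese remainder isomorphism $\mathbb{Z}_n\cong\prod_{p\mid n}\mathbb{Z}_{p^{\nu_p(n)}}$ restricts componentwise to exceptional units, so $\mathcal{N}_{k,c,2}$ is multiplicative in $n$ and it suffices to evaluate $\mathcal{N}_{k,c,2}(p^\nu)$ for each odd prime power $p^\nu$ exactly dividing $n$. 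Second, Hensel's lemma yields
\[
\mathcal{N}_{k,c,2}(p^\nu)=p^{(k-1)(\nu-1)}\,\mathcal{N}_{k,c,2}(p).
\]
Starting from a solution $(x_1,\ldots,x_k)\in(\mathbb{Z}_p^{**})^k$ of $\sum_i x_i^2\equiv c\pmod p$, one lifts $x_1,\ldots,x_{k-1}$ in all $(p^{\nu-1})^{k-1}$ possible ways, and then $f(X)=X^2-(c-\sum_{i<k}\tilde x_i^2)$ satisfies $f(x_k)\equiv 0\pmod p$ while $f'(x_k)=2x_k$ is a unit mod $p$, so Hensel uniquely determines $\tilde x_k$; the exceptional-unit condition modulo $p^\nu$ depends only on the residue modulo $p$, so no lifts are lost.

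The heart of the argument is the prime case. By orthogonality of additive characters,
\[
\mathcal{N}_{k,c,2}(p)=\frac{1}{p}\sum_{t=0}^{p-1}e\!\left(\frac{-tc}{p}\right)S(t)^k,\qquad S(t):=\sum_{x\in\mathbb{Z}_p^{**}}e\!\left(\frac{tx^2}{p}\right).
\]
Since $\mathbb{Z}_p^{**}=\{0,1,\ldots,p-1\}\setminus\{0,1\}$, we have $S(t)=G(t)-1-e(t/p)$ with $G(t):=\sum_{x=0}^{p-1}e(tx^2/p)$; note $G(0)=p$, and for $t\neq 0$ the classical Gauss sum evaluation gives $G(t)=\bigl(\tfrac{t}{p}\bigr)G(1)$ with $G(1)^2=(-1)^{(p-1)/2}p$. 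The term $t=0$ contributes $(p-2)^k$. For $t\neq 0$ I would expand $(G(t)-1-e(t/p))^k$ by the binomial theorem and split it according to the parity of the exponent on $G(t)$, so that $G(t)^{2i}=(-1)^{(p-1)i/2}p^i$ and $G(t)^{2i+1}=\bigl(\tfrac{t}{p}\bigr)G(1)(-1)^{(p-1)i/2}p^i$; the remaining factor expands via $(-1-e(t/p))^s=(-1)^s\sum_{j=0}^s\binom{s}{j}e(tj/p)$. The resulting $t$-sums are of two types: $\sum_{t\neq 0}e(t(j-c)/p)$, equal to $p-1$ if $p\mid j-c$ and $-1$ otherwise; and $\sum_{t\neq 0}\bigl(\tfrac{t}{p}\bigr)e(t(j-c)/p)=\bigl(\tfrac{j-c}{p}\bigr)G(1)$ when $p\nmid j-c$ and $0$ otherwise. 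After using $G(1)^2=(-1)^{(p-1)/2}p$ to collapse the odd branch and recognizing $(-1)^k(p-2)^k=(2-p)^k$, the outcome is exactly the bracketed expression in the theorem, multiplied by $(-1)^k/p$.

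Finally, assembling the local factors via multiplicativity turns $(-1)^k\cdot p^{-1}\cdot p^{(k-1)(\nu_p(n)-1)}$ into $p^{k\nu_p(n)-\nu_p(n)-k}$ at each prime, and $\prod_{p\mid n}(-1)^k$ into $(-1)^{k\omega(n)}$, yielding the stated formula. Conceptually all three steps are standard; the principal obstacle is bookkeeping in the prime case, namely tracking the interleaved signs $(-1)^{(p-1)i/2}$, $(-1)^s$, and $(-1)^k$ so that the two summations in $i$ emerge with the precise signs $-(-1)^{(p-1)i/2}$ and $-(-1)^{(p-1)(i+1)/2}$ displayed in the theorem, and verifying that the Legendre-symbol restriction $p\nmid j-c$ in the odd branch arises automatically from the vanishing of the shifted Gauss sum when $p\mid j-c$.
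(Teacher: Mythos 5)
Your proposal is correct and follows essentially the same route as the paper: CRT multiplicativity, a Hensel lift of the last coordinate giving the factor $p^{(k-1)(\nu-1)}$, and the character-sum evaluation of $\mathcal{N}_{k,c,2}(p)$ via $S(t)=G(t)-1-e(t/p)$ with the quadratic Gauss sum and the shifted sum $\sum_{t\neq 0}\bigl(\tfrac{t}{p}\bigr)e(t(j-c)/p)$, split by the parity of the exponent on $G(t)$. The only cosmetic differences are that you dispose of the even case by the direct parity observation rather than the cardinality formula for $\mathbb{Z}_n^{**}$, and you use an iterated binomial expansion where the paper uses a trinomial one; both yield the same bookkeeping.
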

Evidently, Theorem \ref{thm1} generalizes the Yang-Zhao
theorem \cite{[YZ]} from the liner case to the quadratic
case, and extends Mollahajiaghaei's theorem \cite{[Mo]}
from the unit solution case to the exceptional unit
solution case.

This paper is organized as follows. We present in Section
2 several lemmas that are needed in the proof of Theorem
\ref{thm1}. Finally, Section 3 is devoted to the
proof of Theorem \ref{thm1}.

\section{Preliminary lemmas}
In this section, we present some lemmas that we require to
prove Theorem \ref{thm1}. We begin with the well-known
Chinese remainder theorem.

\begin{lem} \label{lem1} \cite{[A]}
Let $r$ be a positive integer, $b_1,...,b_r$ be arbitrary integers
and $m_1,..., m_r$ be positive integers relatively prime in pairs.
Then the system of congruences
$$
x\equiv b_i\pmod{m_i}\ (i=1,...,r)
$$
has exactly one solution modulo the product $m_1\cdots m_r$.
\end{lem}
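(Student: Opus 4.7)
The plan is to prove this classical result in two steps: existence of a solution, then uniqueness modulo $M := m_1\cdots m_r$. For existence I would give an explicit construction. Set $M_i := M/m_i$ for each $i$. Since $m_j$ is coprime to $m_i$ for every $j\neq i$, the product $M_i$ is coprime to $m_i$, so Bezout's identity furnishes an integer $y_i$ with $M_i y_i \equiv 1 \pmod{m_i}$. I would then take $x_0 := \sum_{i=1}^{r} b_i M_i y_i$ as the candidate solution. To verify $x_0 \equiv b_j \pmod{m_j}$, I observe that $m_j \mid M_i$ whenever $i\neq j$, so only the $i=j$ term survives modulo $m_j$, leaving $b_j M_j y_j \equiv b_j \pmod{m_j}$.

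For uniqueness, I would suppose $x$ and $x'$ both satisfy the system, so $m_i \mid (x-x')$ for every $i$. The short auxiliary fact that pairwise coprime integers dividing a common integer must have their product dividing that integer (a one-line induction on $r$ using the identity $\gcd(ab,c)=1$ whenever $\gcd(a,c)=\gcd(b,c)=1$) then yields $M \mid (x-x')$, i.e.\ $x \equiv x' \pmod{M}$.

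An alternative route is direct induction on $r$. The base $r=1$ is trivial; for the inductive step I would solve the first $r-1$ congruences modulo $m_1\cdots m_{r-1}$ by the inductive hypothesis, reducing the problem to the $r=2$ case, which is handled by Bezout applied to the coprime integers $m_1\cdots m_{r-1}$ and $m_r$. Since the statement is a textbook theorem cited from \cite{[A]}, there is no genuine obstacle; the only subtle point is in the uniqueness step, where one must carefully invoke pairwise (not merely overall) coprimality to pass from divisibility by each individual $m_i$ to divisibility by the full product $M$.
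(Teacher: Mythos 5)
Your proof is correct and is the standard textbook argument; the paper itself gives no proof of this lemma, simply citing it from Apostol \cite{[A]}, and your construction $x_0=\sum_i b_i M_i y_i$ together with the pairwise-coprimality uniqueness step is exactly the classical proof one finds there. Nothing further is needed.
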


Consequently, we use the above lemma to show the multiplicativity
of the arithmetic function $\mathcal{N}_{k,c,e}$.

\begin{lem}\label{lem2}
Let $k, c$ and $e$ be integers with $k\ge 2$ and $e\ge 1$.
Then $\mathcal{N}_{k,c,e}$ is a multiplicative function.
\end{lem}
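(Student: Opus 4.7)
The plan is to establish the multiplicative identity $\mathcal{N}_{k,c,e}(mn)=\mathcal{N}_{k,c,e}(m)\mathcal{N}_{k,c,e}(n)$ whenever $\gcd(m,n)=1$ by building an explicit bijection between the solution sets via the Chinese remainder theorem (Lemma \ref{lem1}). Concretely, CRT supplies a ring isomorphism $\varphi:\mathbb{Z}_{mn}\to\mathbb{Z}_m\times\mathbb{Z}_n$ sending $x$ to $(x\bmod m,\,x\bmod n)$, which I will extend coordinate-wise to $\Phi:\mathbb{Z}_{mn}^k\to(\mathbb{Z}_m)^k\times(\mathbb{Z}_n)^k$ by $\Phi(x_1,\dots,x_k)=\bigl((x_1\bmod m,\dots,x_k\bmod m),(x_1\bmod n,\dots,x_k\bmod n)\bigr)$. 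The goal is to show that $\Phi$ restricts to a bijection from the set counted by $\mathcal{N}_{k,c,e}(mn)$ onto the product of the sets counted by $\mathcal{N}_{k,c,e}(m)$ and $\mathcal{N}_{k,c,e}(n)$.

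First, I would verify that $\varphi$ carries $\mathbb{Z}_{mn}^{**}$ bijectively onto $\mathbb{Z}_m^{**}\times\mathbb{Z}_n^{**}$. The key observation is that for coprime $m,n$ and any $x\in\mathbb{Z}$, one has $\gcd(x,mn)=1$ if and only if $\gcd(x,m)=\gcd(x,n)=1$; applying this to both $x$ and $1-x$ shows that $x$ is an exceptional unit modulo $mn$ exactly when its two images are exceptional units modulo $m$ and $n$ respectively. Since $\varphi$ is already a bijection, this restriction is automatic. Next, I would check the congruence condition: because $\gcd(m,n)=1$, an integer is divisible by $mn$ iff it is divisible by both $m$ and $n$, so $x_1^e+\cdots+x_k^e\equiv c\pmod{mn}$ iff the analogous congruence holds modulo $m$ and modulo $n$ under the image tuples.

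Putting these two observations together, $\Phi$ restricts to a bijection between the $k$-tuples of exceptional units modulo $mn$ satisfying the congruence and pairs consisting of a $k$-tuple of exceptional units modulo $m$ satisfying the congruence mod $m$ together with a $k$-tuple of exceptional units modulo $n$ satisfying the congruence mod $n$. Counting both sides yields $\mathcal{N}_{k,c,e}(mn)=\mathcal{N}_{k,c,e}(m)\mathcal{N}_{k,c,e}(n)$, and together with $\mathcal{N}_{k,c,e}(1)=1$ this gives multiplicativity. I do not expect any genuine obstacle: the only point that needs care is bookkeeping to ensure that both the exceptional-unit condition and the sum-of-powers condition decompose under CRT, and this is a direct consequence of the ring-isomorphism structure provided by Lemma \ref{lem1}.
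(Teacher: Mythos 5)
Your proposal is correct and follows essentially the same route as the paper: both use the Chinese remainder theorem to set up the reduction map from tuples modulo $mn$ to pairs of tuples modulo $m$ and $n$, check that the exceptional-unit condition and the diagonal congruence both decompose, and conclude by counting via the resulting bijection. The only cosmetic difference is that you package the CRT as a ring isomorphism restricted to the solution sets, while the paper verifies well-definedness, injectivity, and surjectivity of the reduction map by hand.
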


\begin{proof}
First of all, for positive inter $n$, we define the set $S_{k,c,e}(n)$ as follows:
\begin{align}\label{2.1}
S_{k,c,e}(n):=\{(x_1,...,x_k)\in(\mathbb Z_n^{**})^k:
x_1^e+\cdots+x_k^e\equiv c\pmod n\}.
\end{align}
Then $\mathcal{N}_{k,c,e}(n)=|S_{k,c,e}(n)|$.
Let $n=n_1n_2$ with $n_1, n_2\in\mathbb{Z}^+$ and
$\gcd(n_1,n_2)=1$. As usual, we write
$S_{k,c,e}(n_1)\times S_{k,c,e}(n_2)$
for the Cartesian product. Then we
define the map $\theta$ as follows:
\begin{align*}
\theta:S_{k,c,e}(n)&\rightarrow S_{k,c,e}(n_1)\times S_{k,c,e}(n_2),\\
(x_1,...,x_k)&\mapsto(x_{11},...,x_{k1},x_{12},...,x_{k2}),
\end{align*}
where $x_{ji}\equiv x_j\pmod{n_i}$ and $x_{ji}\in\mathbb Z_{n_i}$
for all integers $i$ and $j$ with $1\le i\le 2$ and $1\le j\le k$.

Let us first show that $\theta$ is well defined. To do so,
we pick any $(x_1,...,x_k)\in S_{k,c,e}(n)$ with
$\theta(x_1,...,x_k)=(x_{11},...,x_{k1},x_{12},...,x_{k2})$.
Let $1\le j\le k$. As $x_j\in\mathbb{Z}_n^{**}$,
we have $\gcd(x_j,n)=\gcd(1-x_j,n)=1$. Then
$\gcd(x_{ji},n_i)=\gcd(1-x_{ji},n_i)=1$. It follows that
$x_{ji}\in\mathbb Z_{n_i}^{**}$ for $1\le i\le 2$.
Moreover, since $(x_1,...,x_k)\in S_{k,c,e}(n)$, one has
$x_1^e+\cdots+x_k^e\equiv c\pmod{n}$. Since
$x_{ji}\equiv x_j\pmod{n_i}$ and $n=n_1n_2$,
we have $$x_{1i}^e+\cdots+x_{ki}^e\equiv c\pmod{n_i}.$$
It infers that $(x_{11},...,x_{k1})\in S_{k,c,e}(n_1)$
and $(x_{12},...,x_{k2})\in S_{k,c,e}(n_2)$.
Hence $\theta$ is well defined.
So to prove that Lemma 2.2, it is enough
to prove that $\theta$ is a bijection.

On the one hand, let
$(x_1,...,x_k), (y_1,...,y_k)\in S_{k,c,e}(n)$
with $(x_1,...,x_k)\ne(y_1,...,y_k)$.
Then there exist $j\in\{1,...,k\}$
such that $x_j\not\equiv y_j\pmod n$.
It is easy to see that $x_{j1}\not\equiv y_{j1}\pmod{n_1}$
or $x_{j2}\not\equiv y_{j2}\pmod{n_2}$ as
$n=n_1n_2$ and $\gcd(n_1,n_2)=1$. Then
$\theta(x_1,...,x_k)\ne\theta(y_1,...,y_k)$.
Thus $\theta$ is an injection.

On the other hand, let $(x_{11}, ..., x_{k1}, x_{12}, ..., x_{k2})$
be any element of $ S_{k,c,e}(n_1)\times S_{k,c,e}(n_2)$.
Since $\gcd(n_1,n_2)=1$, Lemma \ref{lem1}
tells us that there exist a unique $k$-tuple
$(x_1,...,x_k)\in\mathbb{Z}_n^k$ such that
\begin{align*}
x_j\equiv x_{ji}\pmod{n_i}\ (1\le i\le 2, 1\le j\le k).
\end{align*}
Thus
$$x_1^e+\cdots+x_k^e\equiv x_{1i}^e
+\cdots+x_{ki}^e\equiv c\pmod{n_i}$$
holds for $i\in\{1,2\}$. Therefore
\begin{align*}
x_1^e+\cdots+x_k^e\equiv c\pmod{n}
\end{align*}
which implies that $(x_{1},...,x_{k})\in S_{k,c,e}(n)$.
Hence $\theta$ is a surjection, and so $\theta$ is a bijection.
It then follows immediately that $\mathcal{N}_{k,c,e}$
is a multiplicative function as desired. So
Lemma \ref{lem2} is proved.
\end{proof}

For the case $e=1$, it is easy to see that
$\mathcal{N}_{k,c,1}(p^s)=p^{(k-1)(s-1)}\mathcal{N}_{k,c,1}(p)$
since any solution $(a_1,...,a_k)\in(\mathbb{Z}_p^{**})^k$ of the
congruence $x_1+\cdots+x_k\equiv c\pmod p$ can be lifted to
$(a_1+b_1,...,a_{k-1}+b_{k-1},a_k-\sum_{i=1}^{k-1}b_i)
\in(\mathbb{Z}_{p^s}^{**})^k$ such that
$$(a_1+b_1)+\cdots+(a_{k-1}+b_{k-1})+(a_k-\sum_{i=1}^{k-1}b_i)
\equiv c\pmod p,$$
where $b_1,...b_{k-1}\in p\mathbb{Z}_{p^s}$.
However, for the higher $e$-th diagonal congruence with $e\ge 2$,
the similar argument cannot guarantee the truth of the same
relationship between $\mathcal{N}_{k,c,e}(p^s)$ and
$\mathcal{N}_{k,c,e}(p)$. In this case, we need a new tool,
i.e., the celebrated Hensel's lemma which is stated as follows.

\begin{lem} \label{lem3} \cite{[K]} (Hensel's lemma)
Let $f(x)=c_0+c_1x+c_2x^2+\cdots+c_nx^n$ be a polynomial
whose coefficients are $p$-adic integers. Let
$f'(x)=c_1+2c_2x+\cdots+nc_nx^{n-1}$ be the derivative of
$f(x)$. Let $a_0$ be a $p$-adic integer such that
$f(a_0)\equiv 0\pmod p$ and $f'(a_0)\not\equiv 0\pmod p$.
Then there exists a unique $p$-adic integer $a$ such that
$$
f(a)=0\ and\ a\equiv a_0\pmod p.
$$
\end{lem}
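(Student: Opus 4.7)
The plan is to prove existence by $p$-adic Newton iteration and uniqueness by a short Taylor argument. Define a sequence $(a_k)_{k\ge 0}$ in $\mathbb{Z}_p$ recursively by $a_{k+1}:=a_k-f(a_k)/f'(a_k)$, and prove by induction on $k$ the three invariants: (i) $a_k\equiv a_0\pmod{p}$; (ii) $f'(a_k)\equiv f'(a_0)\pmod{p}$, so in particular $f'(a_k)$ is a unit of $\mathbb{Z}_p$ and the iteration is well defined; (iii) $v_p(f(a_k))\ge k+1$. The base case is the given hypotheses together with (i) trivially holding at $k=0$.

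The inductive step rests on the Taylor identity
$$f(x+h)=f(x)+f'(x)h+h^2R(x,h),\qquad R(x,h)\in\mathbb{Z}_p[x,h],$$
obtained by expanding each $(x+h)^j$ via the binomial theorem and grouping the terms in $h^j$ with $j\ge 2$; the coefficients of $R$ are $\mathbb{Z}$-linear combinations of the $c_j$ and hence lie in $\mathbb{Z}_p$. Setting $x=a_k$ and $h=-f(a_k)/f'(a_k)$, the first two terms on the right collapse by construction, leaving $f(a_{k+1})=h^2R(a_k,h)$. By (ii) and (iii), $v_p(h)=v_p(f(a_k))-v_p(f'(a_k))\ge k+1$, so $v_p(f(a_{k+1}))\ge 2(k+1)\ge k+2$, which is (iii) at the next step; (i) and (ii) at step $k+1$ follow from $a_{k+1}\equiv a_k\pmod{p^{k+1}}$, hence a fortiori modulo $p$. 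With these invariants in hand, $(a_k)$ is Cauchy in the $p$-adic metric and converges to some $a\in\mathbb{Z}_p$ with $a\equiv a_0\pmod{p}$; by continuity of polynomial evaluation, $f(a)=\lim_k f(a_k)=0$.

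Uniqueness is extracted from the same Taylor identity. If $b\in\mathbb{Z}_p$ satisfies $f(b)=0$ and $b\equiv a_0\equiv a\pmod{p}$, set $h:=b-a\in p\mathbb{Z}_p$ and expand $0=f(b)=f(a)+f'(a)h+h^2R(a,h)=h\bigl(f'(a)+hR(a,h)\bigr)$. Since $f'(a)\equiv f'(a_0)\not\equiv 0\pmod{p}$ while $hR(a,h)\in p\mathbb{Z}_p$, the parenthesized factor is a unit of $\mathbb{Z}_p$; as $\mathbb{Z}_p$ is an integral domain, this forces $h=0$, i.e., $b=a$.

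The main obstacle is verifying that each Newton step roughly doubles the $p$-adic precision of the approximate root, so that the sequence converges $p$-adically rather than merely stabilising modulo a fixed power of $p$. This is precisely where both hypotheses are used: $f'(a_0)\not\equiv 0\pmod{p}$ supplies the unit $f'(a_k)$ needed to invert at each iteration, while the $p$-adic integrality of the $c_j$ ensures that the remainder polynomial $R(x,h)$ has $\mathbb{Z}_p$-integer coefficients and therefore does not degrade the valuation estimate $v_p(h^2R(a_k,h))\ge 2v_p(h)$. The remaining steps are routine manipulations with $p$-adic valuations.
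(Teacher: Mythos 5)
The paper does not prove this lemma at all: it is quoted verbatim from Koblitz's book (reference [K]) and used as a black box, so there is no in-paper argument to compare against. Your Newton-iteration proof is correct and complete. The invariants (i)--(iii) are set up properly, the Taylor identity $f(x+h)=f(x)+f'(x)h+h^2R(x,h)$ with $R\in\mathbb{Z}_p[x,h]$ is the right tool and is justified, the valuation bookkeeping $v_p(f(a_{k+1}))\ge 2v_p(h)\ge 2(k+1)\ge k+2$ closes the induction, completeness of $\mathbb{Z}_p$ gives the limit, and the uniqueness argument via factoring $0=h\bigl(f'(a)+hR(a,h)\bigr)$ in the integral domain $\mathbb{Z}_p$ is clean. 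For comparison, the proof in Koblitz constructs the root digit by digit, solving one linear congruence modulo $p$ per step (linear convergence), whereas your Newton scheme doubles the precision at each step; both are standard, and yours has the minor advantage that existence and uniqueness fall out of the same Taylor identity. One cosmetic remark: you only need the weaker consequence $v_p(f(a_{k+1}))\ge k+2$ of the quadratic estimate, so the phrase about ``roughly doubling the precision'' is true but not actually essential to your induction.
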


We can now use Hensel's lemma to show the following relation
between $\mathcal{N}_{k,c,e}(p^s)$ and $\mathcal{N}_{k,c,e}(p)$.

\begin{lem}\label{lem4}
Let $k, c, e$ and $s$ be integers with $k\ge 2$ and $e,s\ge 1$
and let $p$ be a prime number coprime to $e$. Then
$$
\mathcal{N}_{k,c,e}(p^s)=p^{(k-1)(s-1)}\mathcal{N}_{k,c,e}(p).
$$
\end{lem}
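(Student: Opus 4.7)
The plan is to establish a $p^{(k-1)(s-1)}$-to-$1$ reduction map from $S_{k,c,e}(p^s)$ to $S_{k,c,e}(p)$, with the bulk of the work being a Hensel-lifting argument on the last coordinate.

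More precisely, I would consider the reduction map
\[
\phi: S_{k,c,e}(p^s)\longrightarrow S_{k,c,e}(p),\quad (x_1,\ldots,x_k)\mapsto(x_1\bmod p,\ldots,x_k\bmod p),
\]
and show that every fiber has exactly $p^{(k-1)(s-1)}$ preimages. (That $\phi$ is well defined is immediate: if $\gcd(x_j,p^s)=\gcd(1-x_j,p^s)=1$ then certainly the same holds mod $p$, and the congruence mod $p^s$ forces the congruence mod $p$.) Summing over all fibers will then yield $\mathcal{N}_{k,c,e}(p^s)=p^{(k-1)(s-1)}\mathcal{N}_{k,c,e}(p)$.

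Fix $(a_1,\ldots,a_k)\in S_{k,c,e}(p)$ and count the lifts. For each of the first $k-1$ coordinates I would let $x_j=a_j+p t_j$ with $t_j\in\{0,1,\ldots,p^{s-1}-1\}$; every such $x_j$ is automatically in $\mathbb{Z}_{p^s}^{**}$ because $\gcd(a_j,p)=\gcd(1-a_j,p)=1$ lifts verbatim to $\mathbb{Z}_{p^s}^{**}$. This yields $(p^{s-1})^{k-1}$ choices. With $x_1,\ldots,x_{k-1}$ fixed, write $d\equiv c-x_1^e-\cdots-x_{k-1}^e\pmod{p^s}$, and observe $d\equiv a_k^e\pmod p$. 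The coordinate $x_k$ must then satisfy $x_k^e\equiv d\pmod{p^s}$ together with $x_k\equiv a_k\pmod p$. Apply Lemma \ref{lem3} to $f(x)=x^e-d$ at the root $a_0=a_k$: we have $f(a_k)\equiv 0\pmod p$ and $f'(a_k)=ea_k^{e-1}\not\equiv 0\pmod p$ because $\gcd(p,e)=1$ and $a_k\in\mathbb{Z}_p^{**}$. Hensel's lemma supplies a unique $p$-adic integer $a$ with $f(a)=0$ and $a\equiv a_k\pmod p$; reducing modulo $p^s$ gives the unique candidate $x_k$, and the congruence $x_k\equiv a_k\pmod p$ guarantees $x_k\in\mathbb{Z}_{p^s}^{**}$. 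Thus each fiber has size $p^{(k-1)(s-1)}\cdot 1$.

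The main obstacle, and the only nontrivial step, is verifying the two Hensel hypotheses simultaneously: the derivative non-vanishing is exactly where the hypothesis $\gcd(p,e)=1$ is used, and it is crucial that $a_k$ be a unit (not merely any residue) so that $a_k^{e-1}\not\equiv 0\pmod p$. Everything else—well-definedness of $\phi$, the count $p^{s-1}$ of exceptional-unit lifts of a fixed $a_j\in\mathbb{Z}_p^{**}$, and assembling the fibers—is routine and parallels the $e=1$ discussion preceding the lemma. After the fiber count is established, the conclusion is immediate.
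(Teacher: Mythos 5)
Your proposal is correct and follows essentially the same route as the paper: lift the first $k-1$ coordinates freely by multiples of $p$ (giving $p^{s-1}$ choices each, all automatically exceptional units), then pin down the last coordinate uniquely via Hensel's lemma applied to $x^e-d$, using $f'(a_k)=ea_k^{e-1}\not\equiv 0\pmod p$ from $\gcd(p,e)=1$ and $a_k\in\mathbb{Z}_p^{**}$. Your framing as a fiber count of the reduction map is only a cosmetic repackaging of the paper's ``each mod-$p$ solution produces exactly $p^{(s-1)(k-1)}$ lifts'' argument.
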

\begin{proof}
Let $S_{k,c,e}(n)$ be given as in (\ref{2.1}).
Then letting $(a_1,...,a_k)\in S_{k,c,e}(p)$ gives
\begin{align}\label{2.4}
a_1^e+\cdots+a_k^e\equiv c\pmod p
\end{align}
and
\begin{align}\label{2.5}
\gcd(a_i,p)=\gcd(1-a_i,p)=1 {\rm\ for\ }1\le i\le k.
\end{align}

Now let ${\bf b}=(b_1, ..., b_{k-1})\in(p\mathbb{Z}_{p^s})^{k-1}$,
and we define the associated function $f_{\bf b}$ as follows:
$$
f_{\bf b}(x):=(a_1+b_1)^e+\cdots+(a_{k-1}+b_{k-1})^e+x^e-c.
$$
Noticing that $b_1\equiv ...\equiv b_{k-1}\equiv 0\pmod p$,
by (\ref{2.4}) one has
\begin{align}\label{2.6}
f_{\bf b}(a_k)
&=(a_1+b_1)^e+\cdots+(a_{k-1}+b_{k-1})^e+a_k^e-c\notag\\
&\equiv a_1^e+\cdots+a_{k-1}^e+a_k^e-c\notag\\
&\equiv 0\pmod p.
\end{align}
Since $p\nmid e$, $a_k\in\mathbb{Z}_p^{**}$ and
$f_{\bf b}'(x)=ex^{e-1}$, we have
\begin{align}\label{2.7}
f_{\bf b}'(a_k)=ea_k^{e-1}\not\equiv 0\pmod p.
\end{align}
So by (\ref{2.6}) and (\ref{2.7}), Lemma \ref{lem3}
guarantees the existence of the unique $p$-adic integer
$a$ such that $f_{\bf b}(a)=0$ and $a\equiv a_k\pmod p.$
Now we write $a:=\beta_0+\beta_1p+\beta_2p^2+\cdots$
with $\beta_0,\beta_1,\beta_2,...\in\{0,...,p-1\}$,
and let $a':=\beta_0+\beta_1p+\beta_2p^2+\cdots+\beta_{s-1}p^{s-1}$.
Then $a'\equiv a\pmod {p^s}$. This implies that
$f_{\bf b}(a')\equiv f_{\bf b}(a)\equiv 0\pmod {p^s}$.
Therefore
\begin{align}\label{2.8}
(a_1+b_1)^e+\cdots+(a_{k-1}+b_{k-1})^e+a'^e\equiv c\pmod {p^s}.
\end{align}

On the other hand, since $b_1,...,b_{k-1}\in p\mathbb{Z}_{p^s}$
and $a'\equiv a\equiv a_k\pmod p$, by (\ref{2.5}) one derives that
\begin{align}\label{2.9'}
(a_1+b_1,...,a_{k-1}+b_{k-1},a')\in(\mathbb{Z}_{p^s}^{**})^k.
\end{align}
Hence for any given $(k-1)$-tuple ${\bf b}=(b_1, ..., b_{k-1})
\in(p\mathbb{Z}_{p^s})^{k-1}$, (\ref{2.8}) together with
(\ref{2.9'}) tells us that there exist a unique $k$-tuple
$(a_1+b_1,...,a_{k-1}+b_{k-1},a')\in S_{k,c,e}(p^s)$
which is congruent to $(a_1,...,a_{k-1},a_k)\in S_{k,c,e}(p)$
modulo $p$.

Since every component of each $(k-1)$-tuple
${\bf b}=(b_1, ..., b_{k-1})\in(p\mathbb{Z}_{p^s})^{k-1}$
has $|p\mathbb{Z}_{p^s}|=p^{s-1}$ choices,
we can conclude that each element
$(a_1,...,a_{k-1},a_k)$ in $S_{k,c,e}(p)$
produces exactly $p^{(s-1)(k-1)}$ different elements
$(a_1+b_1,...,a_{k-1}+b_{k-1},a')$ in $S_{k,c,e}(p^s)$.
Hence $|S_{k,c,e}(p^s)|=p^{(s-1)(k-1)}|S_{k,c,e}(p)|$.
That is,
$\mathcal{N}_{k,c,e}(p^s)=p^{(k-1)(s-1)}\mathcal{N}_{k,c,e}(p)$
as desired.

This ends the proof of Lemma \ref{lem4}.
\end{proof}

The next lemma gives an exact formula for the cardinality
of $\mathbb{Z}_n^{**}$, which can be used to calculate
$\mathcal{N}_{k,c,e}(n)$ for the case when $n$ is an even integer.

\begin{lem}\label{lem5}\cite{[HJ]}
Let $n$ be an integer with $n\ge 2$. Then
$$
|\mathbb{Z}_n^{**}|=n\prod_{p|n}\Big(1-\frac{2}{p}\Big).
$$
\end{lem}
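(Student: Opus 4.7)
The plan is to prove the formula by reducing to prime powers via multiplicativity of the counting function $f(n) := |\mathbb{Z}_n^{**}|$ and then evaluating $f$ on each prime power by a direct count.

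First I would establish that $f$ is multiplicative. For coprime positive integers $n_1, n_2$, the reduction map $\mathbb{Z}_{n_1 n_2} \to \mathbb{Z}_{n_1} \times \mathbb{Z}_{n_2}$ sending $u$ to $(u \bmod n_1, u \bmod n_2)$ is a bijection by Lemma \ref{lem1}. Since $\gcd(u, n_1 n_2) = 1$ is equivalent to $\gcd(u, n_i) = 1$ for $i=1,2$, and likewise $\gcd(1-u, n_1 n_2) = 1$ is equivalent to $\gcd(1-u, n_i) = 1$ for $i=1,2$, this bijection restricts to a bijection $\mathbb{Z}_{n_1 n_2}^{**} \to \mathbb{Z}_{n_1}^{**} \times \mathbb{Z}_{n_2}^{**}$, so $f(n_1 n_2) = f(n_1) f(n_2)$. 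This is essentially the argument of Lemma \ref{lem2} with the diagonal congruence constraint dropped.

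Next I would compute $f(p^s)$ for a prime $p$ and integer $s \ge 1$. An element $u$ of $\{0, 1, \dots, p^s - 1\}$ belongs to $\mathbb{Z}_{p^s}^{**}$ precisely when $u \not\equiv 0 \pmod p$ and $u \not\equiv 1 \pmod p$. Each of these two forbidden residue classes modulo $p$ contains exactly $p^{s-1}$ integers in the range, and the two classes are distinct when $p$ is odd but coincide when $p = 2$. A direct subtraction therefore gives
$$f(p^s) \;=\; p^s - 2p^{s-1} \;=\; p^s\Big(1 - \tfrac{2}{p}\Big);$$
note that for $p=2$ this evaluates to $0$, which correctly reflects the fact that there are no exceptional units modulo powers of $2$.

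Finally, writing $n = \prod_{p \mid n} p^{\nu_p(n)}$ and applying multiplicativity yields
$$f(n) \;=\; \prod_{p \mid n} f\big(p^{\nu_p(n)}\big) \;=\; \prod_{p \mid n} p^{\nu_p(n)}\Big(1 - \tfrac{2}{p}\Big) \;=\; n \prod_{p \mid n}\Big(1 - \tfrac{2}{p}\Big),$$
which is the claimed formula. No step here is genuinely difficult, so the main obstacle (if any) is merely bookkeeping: one must handle the $p=2$ case correctly, where the two forbidden residue classes coalesce, which is exactly what drives $f(n) = 0$ for even $n$ and is in turn consistent with the first half of Theorem \ref{thm1}.
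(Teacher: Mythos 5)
The paper does not prove this lemma at all---it is quoted from Harrington and Jones \cite{[HJ]}---so your proposal supplies a proof where the paper only gives a citation. Your argument (multiplicativity via the Chinese remainder theorem, then a direct count modulo $p^s$) is the standard one and is essentially correct, but one sentence is wrong and, taken literally, would break your own computation at $p=2$: the residue classes $0\bmod 2$ and $1\bmod 2$ do \emph{not} coincide (that would require $2\mid 1$); they are distinct for every prime $p$, which is precisely why the direct subtraction $f(p^s)=p^s-2p^{s-1}$ is valid uniformly, with no inclusion--exclusion correction. If the two classes really did coincide for $p=2$, subtracting $2p^{s-1}$ would double-count and you would get $f(2^s)=2^{s-1}$ instead of $0$. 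The correct explanation of the $p=2$ degeneracy is that the two (distinct, disjoint) forbidden classes together exhaust all of $\mathbb{Z}_{2^s}$, leaving nothing behind. With that sentence repaired, the proof is complete and consistent with the paper's use of the lemma (in particular with the vanishing of $\mathcal{N}_{k,c,2}(n)$ for even $n$ in Theorem \ref{thm1}).
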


Finally, Lemmas \ref{lem6} and \ref{lem7} are identities
corresponding to exponential sums and quadratic Gauss sums
which can be used to compute $\mathcal{N}_{k,c,2}(p)$
for any odd prime $p$.

\begin{lem}\label{lem6}\cite{[A]}
Let $p$ be an odd prime and let $\alpha$
be an integer with $p\nmid\alpha$. Then
$$
\sum_{x=0}^{p-1}e\Big(\frac{\alpha x^2}{p}\Big)
=\varepsilon_p\sqrt p\Big(\frac{\alpha}{p}\Big),
$$
where
\begin{align}\label{2.9}
\varepsilon_p
:=\left\{\begin{array}{ll}
1, & {\it if}\ p\equiv 1\pmod 4,  \\
{\rm i}, & {\it if}\ p\equiv 3\pmod 4.
\end{array}\right.
\end{align}
\end{lem}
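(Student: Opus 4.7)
The plan is to reduce the given exponential sum to the classical quadratic Gauss sum $g(1,p):=\sum_{y=0}^{p-1}\big(\frac{y}{p}\big)e\big(\frac{y}{p}\big)$ and then evaluate $g(1,p)$ by the standard three-step procedure: (i) compute its modulus; (ii) show it is real or purely imaginary according to $p\bmod 4$; (iii) pin down the sign by a trace-of-DFT argument.

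First I would use the fact that the number of $x\in\{0,1,\dots,p-1\}$ with $x^2\equiv y\pmod p$ equals $1+\big(\frac{y}{p}\big)$ (with the convention $\big(\frac{0}{p}\big)=0$). Writing the sum over $x$ as a sum over $y$ weighted by this multiplicity gives
\begin{align*}
\sum_{x=0}^{p-1}e\Big(\frac{\alpha x^2}{p}\Big)
=\sum_{y=0}^{p-1}e\Big(\frac{\alpha y}{p}\Big)
+\sum_{y=0}^{p-1}\Big(\frac{y}{p}\Big)e\Big(\frac{\alpha y}{p}\Big).
\end{align*}
Since $p\nmid\alpha$, the first sum is $0$ by orthogonality of additive characters. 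In the second sum I substitute $y\mapsto \alpha^{-1}y\pmod p$ and use $\big(\frac{\alpha^{-1}}{p}\big)=\big(\frac{\alpha}{p}\big)$ to obtain $\big(\frac{\alpha}{p}\big)g(1,p)$. Thus the whole claim reduces to proving $g(1,p)=\varepsilon_p\sqrt p$.

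Next I would determine $|g(1,p)|$ and its argument up to sign. Multiplying $g(1,p)\,\overline{g(1,p)}$ and swapping the order of summation, the inner character sum $\sum_{y}e\big(\frac{(1-t)y}{p}\big)$ forces $t=1$, which collapses the double sum and yields $|g(1,p)|^2=p$. Then complex conjugation followed by $y\mapsto -y$ gives $\overline{g(1,p)}=\big(\frac{-1}{p}\big)g(1,p)$, so $g(1,p)\in\mathbb R$ when $p\equiv 1\pmod 4$ and $g(1,p)\in \mathrm{i}\,\mathbb R$ when $p\equiv 3\pmod 4$. Combined with the modulus computation, this pins $g(1,p)$ down to $\pm\sqrt p$ or $\pm\mathrm{i}\sqrt p$ in the respective cases.

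The hard part, as always with quadratic Gauss sums, is the sign determination: eliminating the $\pm$ and showing that one always takes the $+$ sign. I would follow Schur's classical argument. Consider the $p\times p$ Fourier-type matrix $F$ with $F_{jk}=e\big(\frac{jk}{p}\big)$ for $0\le j,k\le p-1$. On the one hand $\Tr(F)=\sum_{j=0}^{p-1}e\big(\frac{j^2}{p}\big)=g(1,p)$ by the reduction of the first step (with $\alpha=1$, whose Legendre symbol is $1$). On the other hand, one computes $F^2$ explicitly to be $p$ times a signed permutation matrix, so $F^4=p^2I$; hence the eigenvalues of $F$ lie in $\sqrt p\cdot\{1,\mathrm{i},-1,-\mathrm{i}\}$. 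Denoting their multiplicities $m_1,m_{\mathrm{i}},m_{-1},m_{-\mathrm{i}}$, one obtains a $4\times 4$ linear system from the explicit values of $\Tr(F^r)$ for $r=1,2,3,4$ (the latter three being elementary), whose unique nonnegative integer solution forces $\Tr(F)=\varepsilon_p\sqrt p$, completing the proof. The main technical obstacle is precisely this step; the first two steps are straightforward character manipulations, but isolating the sign is the classical difficulty that Gauss himself spent years resolving, and Schur's matrix trick is the cleanest route I know.
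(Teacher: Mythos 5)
Your proposal is a correct and essentially complete outline of the classical evaluation of the quadratic Gauss sum: the reduction via the solution count $1+\big(\tfrac{y}{p}\big)$ for $x^2\equiv y\pmod p$, the modulus computation $|g(1,p)|^2=p$, the conjugation identity $\overline{g(1,p)}=\big(\tfrac{-1}{p}\big)g(1,p)$, and Schur's determination of the sign from the eigenvalue multiplicities of the matrix $F=\big(e(jk/p)\big)$ are all sound, and together they do prove the lemma. There is, however, nothing in the paper to compare against: the authors state this lemma with a citation to Apostol's book and give no proof, treating it as a known black box. Relative to that cited source, your treatment of the hard step (the sign) is genuinely different: Apostol derives the value of the Gauss sum from the reciprocity law for generalized Gauss sums, which he proves by analytic means (a residue/Poisson-summation type argument), whereas Schur's trace argument is purely algebraic and self-contained. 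The analytic route buys the more general reciprocity formula $G(m;n)$ versus $G(n;m)$ as a byproduct; your route is shorter if one only wants the single evaluation $g(1,p)=\varepsilon_p\sqrt p$ needed here. Either way, for the purposes of this paper a citation suffices, and your argument would stand as a valid replacement for it.
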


\begin{lem}\label{lem7}\cite{[A]}
Let $p$ be an odd prime and let $\alpha$ be an
integer with $p\nmid\alpha$. Then
$$
\sum_{x=1}^{p-1}\Big(\frac{x}{p}\Big)e\Big(\frac{\alpha x}{p}\Big)
=\varepsilon_p\sqrt p\Big(\frac{\alpha}{p}\Big),
$$
where $\varepsilon_p$ is given as in (\ref{2.9}).
\end{lem}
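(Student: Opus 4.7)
The plan is to deduce this identity directly from Lemma \ref{lem6} by rewriting the quadratic Gauss sum $\sum_{x=0}^{p-1} e(\alpha x^2/p)$ in terms of a character sum weighted by the Legendre symbol. The key observation is that for each residue $y$ modulo $p$, the number of $x \in \{0,1,\dots,p-1\}$ with $x^2 \equiv y \pmod p$ equals $1+\big(\frac{y}{p}\big)$ when $y \ne 0$ (with the convention $\big(\frac{0}{p}\big)=0$), and equals $1$ when $y=0$.

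First, I would group the terms in the quadratic Gauss sum by the value of $x^2 \bmod p$ to obtain
\begin{align*}
\sum_{x=0}^{p-1}e\Big(\frac{\alpha x^2}{p}\Big)
=\sum_{y=0}^{p-1}\Big(1+\Big(\frac{y}{p}\Big)\Big)e\Big(\frac{\alpha y}{p}\Big)
=\sum_{y=0}^{p-1}e\Big(\frac{\alpha y}{p}\Big)+\sum_{y=1}^{p-1}\Big(\frac{y}{p}\Big)e\Big(\frac{\alpha y}{p}\Big).
\end{align*}
Next, since $p\nmid\alpha$, the standard geometric-series orthogonality identity gives $\sum_{y=0}^{p-1}e(\alpha y/p)=0$, so the first sum on the right vanishes. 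Combining this with Lemma \ref{lem6} yields
\begin{align*}
\sum_{y=1}^{p-1}\Big(\frac{y}{p}\Big)e\Big(\frac{\alpha y}{p}\Big)
=\sum_{x=0}^{p-1}e\Big(\frac{\alpha x^2}{p}\Big)
=\varepsilon_p\sqrt p\Big(\frac{\alpha}{p}\Big),
\end{align*}
which is exactly the required identity after renaming the summation variable.

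There is essentially no obstacle beyond verifying the elementary counting fact that $|\{x \bmod p : x^2 \equiv y\}|=1+\big(\frac{y}{p}\big)$ for $y\not\equiv 0$, which is immediate from the definition of the Legendre symbol (a nonzero $y$ is a quadratic residue iff it has two square roots, and a non-residue iff it has none). The only genuine input is Lemma \ref{lem6}; everything else is a bookkeeping rearrangement of the exponential sum.
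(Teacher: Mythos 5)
Your argument is correct: the counting identity $|\{x \bmod p : x^2\equiv y\}|=1+\big(\frac{y}{p}\big)$ (valid for all $y$ with the convention $\big(\frac{0}{p}\big)=0$), the vanishing of $\sum_{y=0}^{p-1}e(\alpha y/p)$ for $p\nmid\alpha$, and Lemma \ref{lem6} together give exactly the stated identity. The paper offers no proof of this lemma, citing Apostol instead, and your derivation is the standard one found there, so there is nothing to fault.
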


\section{Proof of Theorem \ref{thm1}}
In this section, we give the proof of
Theorem \ref{thm1}.\\
\\
{\it Proof of Theorem \ref{thm1}.}
First of all, since $\mathcal N_{k,c,e}(1)=1$, Theorem
\ref{thm1} is true when $n=1$. In what follows, we let
$n\ge 2$. If $n$ is even, then Lemma \ref{lem4} tells that
$$
|\mathbb{Z}_n^{**}|=n\prod_{p|n}\Big(1-\frac{2}{p}\Big)=0.
$$
That is, there is no exceptional unit in $\mathbb{Z}_n$.
Hence $\mathcal{N}_{k,c,2}(n)=0$ if $2\mid n$.
In the following, we can always assume that $n$ is odd.

Consequently, let $n$ be an odd integer and
$n=\prod_{i=1}^lp_i^{s_i}$ be its standard
factorization. It then follows from Lemmas
\ref{lem2} and \ref{lem4} that
\begin{align}\label{3.1}
\mathcal{N}_{k,c,2}(n)=\prod_{i=1}^lp_i^{(k-1)(s_i-1)}\mathcal{N}_{k,c,2}(p_i).
\end{align}
So we only need to compute $\mathcal{N}_{k,c,2}(p)$ for any
odd prime $p$ with $p|n$ that will be done in what follows.

As $\mathbb{Z}_p^{**}=\{2,...,p-1\}$, we have
\begin{align}\label{3.2}
\mathcal{N}_{k,c,2}(p)
&=\frac{1}{p}\sum_{(x_1,...,x_k)\in(\mathbb{Z}_p^{**})^k}
\sum_{t\in\mathbb{Z}_p}e\Big(\frac{(x_1^2+\cdots+x_k^2-c)t}{p}\Big)\notag\\
&=\frac{1}{p}\sum_{t=0}^{p-1}\Big(\sum_{x=2}^{p-1}
e\Big(\frac{x^2t}{p}\Big)\Big)^ke\Big(\frac{-ct}{p}\Big)\notag\\
&=\frac{1}{p}\Big((p-2)^k+\sum_{t=1}^{p-1}\Big(\sum_{x=2}^{p-1}
e\Big(\frac{x^2t}{p}\Big)\Big)^ke\Big(\frac{-ct}{p}\Big)\Big).
\end{align}
For any integer $t$ with $1\le t\le p-1$, with Lemma \ref{lem6}
applied to $\sum_{x=2}^{p-1}e\Big(\frac{x^2t}{p}\Big)$, one arrives at
$$
\sum_{x=2}^{p-1}e\Big(\frac{x^2t}{p}\Big)
=\varepsilon_p\sqrt p\Big(\frac{t}{p}\Big)-e\Big(\frac{t}{p}\Big)-1,
$$
where $\varepsilon_p$ is given as in (\ref{2.9}). As usual, for any
nonnegative integers $x,y,z$ and $t$ with $x+y+z=t$, we denote by
$\binom{t}{x,y,z}$ the trinomial coefficient, that is,
$\binom{t}{x,y,z}=\frac{t!}{x!y!z!}$. It then follows that
\begin{align}\label{3.3}
\Big(\sum_{x=2}^{p-1}e\Big(\frac{x^2t}{p}\Big)\Big)^k
&=\sum_{k_1+k_2+k_3=k\atop{k_1, k_2, k_3\ge 0}}\binom{k}{k_1,k_2,k_3}
\Big(\varepsilon_p\sqrt p\Big(\frac{t}{p}\Big)\Big)^{k_1}
\Big(-e\Big(\frac{t}{p}\Big)\Big)^{k_2}(-1)^{k_3}.
\end{align}
Putting (\ref{3.3}) into (\ref{3.2}), we have
\begin{align}\label{3.4}
&\mathcal{N}_{k,c,2}(p)\notag\\
=&\frac{(p-2)^k}{p}+\frac{1}{p}\sum_{t=1}^{p-1}\sum_{k_1+k_2+k_3
=k\atop{k_1, k_2, k_3\ge 0}}(-1)^{k_2+k_3}\binom{k}{k_1,k_2,k_3}
(\varepsilon_p\sqrt p)^{k_1}\Big(\frac{t}{p}\Big)^{k_1}
e\Big(\frac{{k_2}t}{p}\Big)e\Big(\frac{-ct}{p}\Big)\notag\\
=&\frac{(p-2)^k}{p}+\frac{1}{p}\sum_{k_1+k_2+k_3
=k\atop{k_1, k_2, k_3\ge 0}}(-1)^{k_2+k_3}\binom{k}{k_1,k_2,k_3}
(\varepsilon_p\sqrt p)^{k_1}\sum_{t=1}^{p-1}
\Big(\frac{t}{p}\Big)^{k_1}e\Big(\frac{({k_2}-c)t}{p}\Big).
\end{align}
Since $\gcd(t,p)=1$ for any $t\in\{1,...,p-1\}$,
one has $\big(\frac{t}{p}\big)\in\{1,-1\}$.
Then for the inner sum in (\ref{3.4}),
we need only to consider the following two cases.

(i). $k_1$ is even. Then
\begin{align}\label{3.5}
\sum_{t=1}^{p-1}\Big(\frac{t}{p}\Big)^{k_1}e\Big(\frac{(k_2-c)t}{p}\Big)
=\sum_{t=1}^{p-1}e\Big(\frac{(k_2-c)t}{p}\Big)
=\left\{\begin{array}{ll}
p-1, & {\rm if}\ p|(k_2-c),  \\
-1, & {\rm if}\ p\nmid(k_2-c).
\end{array}\right.
\end{align}

(ii). $k_1$ is odd. Noticing that $e\Big(\frac{(k_2-c)t}{p}\Big)=1$
if $p\mid (k_2-c)$ and applying Lemma \ref{lem7} to the case
$p\nmid (k_2-c)$, one derives that
\begin{align}\label{3.6}
\sum_{t=1}^{p-1}\Big(\frac{t}{p}\Big)^{k_1}e\Big(\frac{(k_2-c)t}{p}\Big)
=\sum_{t=1}^{p-1}\Big(\frac{t}{p}\Big)e\Big(\frac{(k_2-c)t}{p}\Big)
=\left\{\begin{array}{ll}
0, & {\rm if}\ p|(k_2-c),  \\
\varepsilon_p\sqrt p\big(\frac{k_2-c}{p}\big), & {\rm if}\ p\nmid(k_2-c).
\end{array}\right.
\end{align}
Hence putting (\ref{3.5}) and (\ref{3.6}) into (\ref{3.4}), one arrives at
\begin{align*}
\mathcal{N}_{k,c,2}(p)
=&\frac{(p-2)^k}{p}+\frac{1}{p}\sum_{k_1+k_2+k_3=k,k_i\ge 0
\atop{2|k_1,p|(k_2-c)}}(-1)^{k_2+k_3}\binom{k}{k_1,k_2,k_3}
(\varepsilon_p\sqrt p)^{k_1}(p-1)\\
&+\frac{1}{p}\sum_{k_1+k_2+k_3=k,k_i\ge 0
\atop{2|k_1,p\nmid(k_2-c)}}(-1)^{k_2+k_3}\binom{k}{k_1,k_2,k_3}
(\varepsilon_p\sqrt p)^{k_1}(-1)\\
&+\frac{1}{p}\sum_{k_1+k_2+k_3=k,k_i\ge 0\atop{2\nmid k_1,
p\nmid(k_2-c)}}(-1)^{k_2+k_3}\binom{k}{k_1,k_2,k_3}
(\varepsilon_p\sqrt p)^{k_1+1}\Big(\frac{k_2-c}{p}\Big)\\
=&\frac{(p-2)^k}{p}-\frac{1}{p}\sum_{k_1+k_2+k_3=k,k_i\ge 0
\atop{2|k_1}}(-1)^{k_2+k_3}\binom{k}{k_1,k_2,k_3}
(\varepsilon_p\sqrt p)^{k_1}\\
&+\sum_{k_1+k_2+k_3=k,k_i\ge 0
\atop{2|k_1,p|(k_2-c)}}(-1)^{k_2+k_3}\binom{k}{k_1,k_2,k_3}
(\varepsilon_p\sqrt p)^{k_1}\\
&+\frac{1}{p}\sum_{k_1+k_2+k_3=k,k_i\ge 0
\atop{2\nmid k_1,p\nmid(k_2-c)}}(-1)^{k_2+k_3}\binom{k}{k_1,k_2,k_3}
(\varepsilon_p\sqrt p)^{k_1+1}\Big(\frac{k_2-c}{p}\Big)\\
:=&\frac{(p-2)^k}{p}-\frac{A}{p}+B+\frac{C}{p}.
\end{align*}

On the other hand, since
$\varepsilon_p^2=(-1)^{\frac{p-1}{2}}$
and
$$\binom{t}{x,y,z}=\binom{t}{x}\binom{t-x}{y}$$
holds for any nonnegative integers $x, y, z$ and
$t$ with $t=x+y+z$, one can easily deduce that
\begin{align*}
A=&\sum_{2k_1+k_2+k_3=k, {k_i\ge 0}}(-1)^{k_2+k_3}
\binom{k}{2k_1,k_2,k_3}(\varepsilon_p\sqrt p)^{2k_1}\\
=&(-1)^{k}\sum_{k_1=0}^{\lfloor\frac{k}{2}
\rfloor}\binom{k}{2k_1}(\varepsilon_p^{2}p)^{k_1}
\sum_{k_2=0}^{k-2k_1}\binom{k-2k_1}{k_2}\\
=&(-1)^{k}\sum_{k_1=0}^{\lfloor\frac{k}{2}\rfloor}
\binom{k}{2k_1}2^{k-2k_1}(-1)^{\frac{(p-1)k_1}{2}}p^{k_1},
\end{align*}
\begin{align*}
B=&\sum_{2k_1+k_2+k_3=k,k_i\ge 0\atop{p|(k_2-c)}}(-1)^{k_2+k_3}
\binom{k}{2k_1,k_2,k_3}(\varepsilon_p\sqrt p)^{2k_1}\\
=&(-1)^{k}\sum_{k_1=0}^{\lfloor\frac{k}{2}
\rfloor}\binom{k}{2k_1}(-1)^{\frac{(p-1)k_1}{2}}p^{k_1}
\sum_{k_2=0\atop{p|(k_2-c)}}^{k-2k_1}\binom{k-2k_1}{k_2}
\end{align*}
and
\begin{align*}
C=& \sum_{2k_1+k_2+k_3=k-1,k_i\ge 0
\atop{p\nmid(k_2-c)}}(-1)^{k_2+k_3}\binom{k}{2k_1+1,k_2,k_3}
(\varepsilon_p\sqrt p)^{2k_1+2}\Big(\frac{k_2-c}{p}\Big)\\
=& (-1)^{k-1}\sum_{k_1=0}^{\lfloor\frac{k-1}{2}\rfloor}
\binom{k}{2k_1+1}(-1)^{\frac{(p-1)(k_1+1)}{2}}p^{k_1+1}
\sum_{k_2=0\atop{p\nmid(k_2-c)}}^{k-2k_1-1}
\binom{k-2k_1-1}{k_2}\Big(\frac{k_2-c}{p}\Big).
\end{align*}
Therefore
\begin{align}\label{3.7}
&\mathcal{N}_{k,c,2}(p)\notag\\
=&\frac{(-1)^{k}}{p}\Bigg((2-p)^k-\sum_{k_1=0}^{\lfloor\frac{k}{2}\rfloor}
\binom{k}{2k_1}(-1)^{\frac{(p-1)k_1}{2}}p^{k_1}\Big(2^{k-2k_1}
-p\sum_{k_2=0\atop{p|(k_2-c)}}^{k-2k_1}\binom{k-2k_1}{k_2}\Big)\notag\\
&-\sum_{k_1=0}^{\lfloor\frac{k-1}{2}\rfloor}
\binom{k}{2k_1+1}(-1)^{\frac{(p-1)(k_1+1)}{2}}p^{k_1+1}
\sum_{k_2=0\atop{p\nmid(k_2-c)}}^{k-2k_1-1}
\binom{k-2k_1-1}{k_2}\Big(\frac{k_2-c}{p}\Big)\Bigg).
\end{align}

Finally, the expected result follows immediately from
(\ref{3.1}) and (\ref{3.7}).

This concludes the proof of Theorem \ref{thm1}. \qed

\bibliographystyle{amsplain}

\end{document}